\newcommand{\burl}[1]{\textcolor{blue}{\url{#1}}}
\newcommand\be{\begin{equation}}
\newcommand\ee{\end{equation}}
\newcommand\nbea{\begin{eqnarray*}}
\newcommand\neea{\end{eqnarray*}}
\newcommand\bea{\begin{eqnarray}}
\newcommand\eea{\end{eqnarray}}
\newcommand\bi{\begin{itemize}}
\newcommand\ei{\end{itemize}}
\newcommand\ben{\begin{enumerate}}
\newcommand\een{\end{enumerate}}
\newcommand\bc{\begin{center}}
\newcommand\ec{\end{center}}
\newcommand\ba{\begin{array}}
\newcommand\ea{\end{array}}
\newtheorem{thm}{Theorem}[section]
\newtheorem{lem}[thm]{Lemma}
\newcommand{\twocase}[5]{#1 \begin{cases} #2 & \text{{\rm #3}}\\ #4
&\text{{\rm #5}} \end{cases}   }
\newcommand{\hr}[1]{\href{#1}{\url{#1}}}
\numberwithin{equation}{section}
\title[Gaussian Behavior in Zeckendorf Decompositions in Small Intervals]{Gaussian Behavior of the Number of Summands in Zeckendorf Decompositions in Small Intervals}
\author[Best]{Andrew Best}
\email{\textcolor{blue}{\href{mailto:ajb5@williams.edu)
}{ajb5@williams.edu}}}
\address{Department of Mathematics and Statistics, Williams College, Williamstown, MA 01267}
\author[Dynes]{Patrick Dynes}
\email{\textcolor{blue}{\href{mailto:pdynes@clemson.edu)
}{pdynes@clemson.edu}}}
\address{Department of Mathematical Sciences, Clemson University, Clemson, SC 29634}
\author[edelsbrunner]{Xixi Edelsbrunner}
\email{\textcolor{blue}{\href{mailto:xe1@williams.edu}{xe1@williams.edu}}}
\address{Department of Mathematics and Statistics, Williams College, Williamstown, MA 01267}
\author[McDonald]{Brian McDonald}
\email{\textcolor{blue}{\href{mailto:bmcdon11@u.rochester.edu}{bmcdon11@u.rochester.edu}}}
\address{Department of Mathematics, University of Rochester, Rochester, NY 14627}
\author[Miller]{Steven J. Miller}
\email{\textcolor{blue}{\href{mailto:sjm1@williams.edu}{sjm1@williams.edu}},  \textcolor{blue}{\href{Steven.Miller.MC.96@aya.yale.edu}{Steven.Miller.MC.96@aya.yale.edu}}}
\address{Department of Mathematics and Statistics, Williams College, Williamstown, MA 01267}
\author[Tor]{Kimsy Tor}
\email{\textcolor{blue}{\href{mailto:ktor.student@manhattan.edu}{ktor.student@manhattan.edu}}}
\address{Department of Mathematics, Manhattan College, Riverdale, NY 10471}
\author[Turnage-Butterbaugh]{Caroline Turnage-Butterbaugh}
\email{\textcolor{blue}{\href{mailto:cturnagebutterbaugh@gmail.com}{cturnagebutterbaugh@gmail.com}}}
\address{Department of Mathematics, North Dakota State University, Fargo, ND 58102}
\author[Weinstein]{Madeleine Weinstein}
\email{\textcolor{blue}{\href{mailto:mweinstein@g.hmc.edu}{mweinstein@g.hmc.edu}}}
\address{Department of Mathematics, Harvey Mudd College, Claremont, CA 91711 }
\thanks{This research was conducted as part of the 2014 SMALL REU program at Williams College and was supported by NSF grant DMS1347804 and DMS1265673,  Williams College, and the Clare Boothe Luce Program of the Henry Luce Foundation. It is a pleasure to thank the participants of the SMALL REU and the 16\textsuperscript{th} International Conference on Fibonacci Numbers and their Applications for helpful discussions.}
\subjclass[2010]{11B39 (primary) 65Q30, 60B10 (secondary)}
\keywords{Zeckendorf's Theorem, Central Limit Type Theorems}
\date{\today}
\begin{document}

\begin{abstract}
Zeckendorf's theorem states that every positive integer can be written uniquely as a sum of non-consecutive Fibonacci numbers ${F_n}$, with initial terms $F_1 = 1, F_2 = 2$. We consider the distribution of the number of summands involved in such decompositions. Previous work proved that as $n \to \infty$ the distribution of the number of summands in the Zeckendorf decompositions of $m \in [F_n, F_{n+1})$, appropriately normalized, converges to the standard normal. The proofs crucially used the fact that all integers in $[F_n, F_{n+1})$ share the same potential summands.

We generalize these results to subintervals of $[F_n, F_{n+1})$ as $n \to \infty$; the analysis is significantly more involved here as different integers have different sets of potential summands. Explicitly, fix an integer sequence $\alpha(n) \to \infty$. As $n \to \infty$, for almost all $m \in [F_n, F_{n+1})$ the distribution of the number of summands in the Zeckendorf decompositions of integers in the subintervals $[m, m + F_{\alpha(n)})$, appropriately normalized, converges to the standard normal. The proof follows by showing that, with probability tending to $1$, $m$ has at least one appropriately located large gap between indices in its decomposition. We then use a correspondence between this interval and $[0, F_{\alpha(n)})$ to obtain the result, since the summands are known to have Gaussian behavior in the latter interval. % We also prove the same result for more general linear recurrences.
\end{abstract}

\maketitle

\tableofcontents

%\addcontentsline{toc}{section}{Table of Contents}

%%%%%%%%%%%%%%%%%%%%%%%%%%%%%%%%%%%%%%%%%%%%%%%%%%%%%%%%%%%%%%%%%%%%%%%%%%%%%%%%%%%%%%%%%%%%%%%%%%%%%%%%%%%%%%%%%%%%%%%%%%%%%%%%%%%%%%%%%%
%%%%%%%%%%%%%%%%%%%%%%%%%%%%%%%%%%%%%%%%%%%%%%%%%%%%%%%%%%%%%%%%%%%%%%%%%%%%%%%%%%%%%%%%%%%%%%%%%%%%%%%%%%%%%%%%%%%%%%%%%%%%%%%%%%%%%%%%%%
%%%%%%%%%%%%%%%%%%%%%%%%%%%%%%%%%%%%%%%%%%%%%%%%%%%%%%%%%%%%%%%%%%%%%%%%%%%%%%%%%%%%%%%%%%%%%%%%%%%%%%%%%%%%%%%%%%%%%%%%%%%%%%%%%%%%%%%%%%
%%%%%%%%%%%%%%%%%%%%%%%%%%%%%%%%%%%%%%%%%%%%%%%%%%%%%%%%%%%%%%%%%%%%%%%%%%%%%%%%%%%%%%%%%%%%%%%%%%%%%%%%%%%%%%%%%%%%%%%%%%%%%%%%%%%%%%%%%%
%%%%%%%%%%%%%%%%%%%%%%%%%%%%%%%%%%%%%%%%%%%%%%%%%%%%%%%%%%%%%%%%%%%%%%%%%%%%%%%%%%%%%%%%%%%%%%%%%%%%%%%%%%%%%%%%%%%%%%%%%%%%%%%%%%%%%%%%%%
\section{Introduction}

\subsection{History}

Let $\{F_n\}$ denote the Fibonacci numbers, normalized so that $F_1 = 1$, $F_2 = 2$\footnote{We define the sequence this way to retain uniqueness in our decompositions}, and $F_{n+1} = F_n + F_{n-1}$. An interesting equivalent definition of the Fibonacci numbers is that they are the unique sequence of positive integers such that every positive integer has a unique legal decomposition as a sum of non-adjacent terms. This equivalence is known as Zeckendorf's theorem \cite{Ze} and has been extended by many authors to a variety of other sequences.

For the Fibonacci numbers, Lekkerkerker \cite{Lek} proved that the average number of summands needed in the Zeckendorf decomposition of an integer $m \in [F_n, F_{n+1})$ is $\frac{n}{\varphi^2+1} + O(1)$, where $\varphi = \frac{1+\sqrt{5}}{2}$, the golden mean, is the largest root of the Fibonacci recurrence. This has been extended to other positive linear recurrence sequences, and much more is known. Namely, the distribution of the number of summands converges to a Gaussian as $n\to\infty$. There are several different methods of proof, from continued fractions to combinatorial perspectives to Markov processes. See \cite{Day,DDKMV,DG,FGNPT,GT,GTNP,Ke,KKMW,LT,Len,MW1,MW2,Ste1,Ste2} for a sampling of results and methods along these lines, \cite{Al,CHMN1,CHMN2,CHMN3,DDKMMV,DDKMV} for generalizations to other types of representations, and \cite{BBGILMT,BILMT} for related questions on the distribution of gaps between summands.

The analysis in much of the previous work was carried out for $m \in [F_n, F_{n+1})$. The advantage of such a localization\footnote{As the sequence $\{F_n\}$ is exponentially growing,  it is easy to pass from $m$ in this interval to $m \in [0, F_n)$.} is that each $m$ has the same candidate set of summands and is of roughly the same size. The purpose of this work is to explore some of the above questions on a significantly smaller scale and determine when and how often we obtain Gaussian behavior. Note that we cannot expect such behavior to hold for all sub-intervals of $[F_n, F_{n+1})$, even if we require the size to grow with $n$. To see this, consider the interval \be [F_{2n} + F_{n} + F_{n-2} + \cdots + F_{\lfloor n^{1/4} \rfloor}, F_{2n} + F_{n+1} + F_{\lfloor n^{1/4}\rfloor}). \ee The integers in the above interval that are less than $F_{2n} + F_{n+1}$ have on the order of $n/2$ summands, while those that are larger have at most on the order of $n^{1/4}$ summands. Thus the behavior cannot be Gaussian.\footnote{Though in this situation it would be interesting to investigate separately the behavior on both sides.}

\subsection{Main Result}\hspace*{\fill} \\

Fix any increasing positive integer valued function $\alpha(n)$ with \be\label{eq:condsonalpha} \lim_{n\to\infty}{\alpha(n)} \ = \ \lim_{n\to\infty}\left(n-\alpha(n)\right)\ = \ \infty.\ee  Our main result, given in the following theorem, extends the Gaussian behavior of the number of summands in Zeckendorf decompositions to smaller intervals. Note that requiring $m$ to be in $[F_n, F_{n+1})$ is not a significant restriction because given any $m$, there is always an $n$ such that this holds.

\begin{thm}[Gaussianity on small intervals] \label{thm:mainthm} For $\alpha(n)$ satisfying \eqref{eq:condsonalpha}, the distribution of the number of summands in the decompositions of integers in the interval $[m,m+F_{\alpha(n)})$ converges to a Gaussian distribution when appropriately normalized for almost all $m\in [F_n,F_{n+1})$. Specifically, using the notation from equations \eqref{eq:decompmpreCs} and \eqref{eq:decompmintoC1C2C3}, the Gaussian behavior holds for all $m$ where there is a gap of length at least 2 in the $C_2(m)$ (and $q(n) = o(\sqrt{n})$ is an increasing even function that diverges to infinity).
\end{thm}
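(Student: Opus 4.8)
The plan is to transport the Gaussian behavior that is already known on the full interval $[0,F_{\alpha(n)})$ over to the small interval $[m,m+F_{\alpha(n)})$ for almost every $m$. This rests on two ingredients: a \emph{probabilistic} step showing that all but a vanishing proportion of $m\in[F_n,F_{n+1})$ have, in the window $C_2(m)$ of indices just above $\alpha(n)$, a gap of length at least $2$; and a \emph{deterministic} step showing that such a gap sets up a correspondence between $[m,m+F_{\alpha(n)})$ and $[0,F_{\alpha(n)})$ under which the number of summands changes only by a negligible amount. Throughout, the decisive leverage is that the standard deviation of the number of summands on $[0,F_{\alpha(n)})$ grows like $\sqrt{\alpha(n)}\to\infty$, so any discrepancy that is $o(\sqrt{\alpha(n)})$ washes out after normalization; this is what lets me tolerate imperfections in the correspondence.

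For the deterministic step, write $m=C_1(m)+C_2(m)+C_3(m)$ as in \eqref{eq:decompmintoC1C2C3}, with $C_1(m)$ the summands above the window and $C_3(m)<F_{\alpha(n)}$ those below index $\alpha(n)$. If $C_2(m)$ contains a block of consecutive unused indices, then for every $0\le k<F_{\alpha(n)}$ the addition $m+k$ only disturbs summands below that block: since $C_3(m)+k<2F_{\alpha(n)}<F_{\alpha(n)+2}$, any carry generated by adding $k$ is absorbed in the empty block and never reaches $C_1(m)$. Hence $\#\{\text{summands of }m+k\}=c(m)+\#\{\text{summands of the part of }m+k\text{ below the gap}\}$ for a constant $c(m)$, and as $k$ runs over $[0,F_{\alpha(n)})$ this lower value runs over the offset interval $[C_3(m),C_3(m)+F_{\alpha(n)})$. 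I would compare this with $[0,F_{\alpha(n)})$ through the natural length-preserving bijection (identity on the overlap, shift by $F_{\alpha(n)}$ on the wrap-around part): adding $F_{\alpha(n)}$ and re-expanding in Zeckendorf form changes the summand count by a bounded amount, while the carry cascading into the short densely packed run beneath the gap changes it by at most the length of that run. Both discrepancies being $o(\sqrt{\alpha(n)})$, a converging-together (Slutsky-type) argument transfers the standard normal limit from $[0,F_{\alpha(n)})$ (valid since $\alpha(n)\to\infty$) to $[m,m+F_{\alpha(n)})$.

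For the probabilistic step, I would first bound the number of $m$ whose window carries no gap of length $\ge 2$: on $q(n)$ consecutive indices an admissible Zeckendorf pattern with no two adjacent unused indices must alternate, so only polynomially many of the $\asymp\varphi^{q(n)}$ admissible window patterns are gap-free, and after factoring the frozen high digits (which pin $m$ to $[F_n,F_{n+1})$) and the free low digits, the proportion of bad $m$ is $\asymp\varphi^{-q(n)}\to0$ because $q(n)\to\infty$. Moreover, since the probability that the indices immediately above $\alpha(n)$ form an alternating run of length $t$ decays geometrically in $t$, for almost all $m$ the first admissible gap sits within $o(\sqrt{\alpha(n)})$ of $\alpha(n)$; cutting there makes the densely packed run (hence the cascade discrepancy above) genuinely $o(\sqrt{\alpha(n)})$. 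The hypotheses that $q(n)$ is even with $q(n)=o(\sqrt n)$, together with $n-\alpha(n)\to\infty$, guarantee the window is wide enough to force a gap yet narrow enough, and positioned well below index $n$, for this factorization and the error bookkeeping to remain clean.

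The main obstacle is exactly the non-uniformity flagged in the introduction: unlike the classical interval $[F_n,F_{n+1})$, where every integer shares the same candidate summands, here the lower part $C_3(m)$ and the position of the gap inside $C_2(m)$ both depend on $m$, so $[m,m+F_{\alpha(n)})$ is an \emph{offset} copy of $[0,F_{\alpha(n)})$ rather than $[0,F_{\alpha(n)})$ itself. The technical heart is to quantify the effect of this offset on the summand count --- controlling simultaneously the wrap-around term and the carry cascade into the packed run, and showing their combined contribution is negligible against the diverging standard deviation $\sqrt{\alpha(n)}$ \emph{uniformly} over the generic set of $m$. Making these estimates uniform, rather than for a single well-behaved $m$, is where the small-interval problem genuinely departs from its full-interval predecessor, and I expect it to be the most delicate part of the argument.
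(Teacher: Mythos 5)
Your proposal is correct and follows essentially the same route as the paper: the alternating-pattern count giving a $\varphi^{-q(n)}$ bound on the bad set of $m$, the length-$2$ gap in $C_2(m)$ freezing the high digits and yielding the shift/wrap-around bijection onto $[0,F_{\alpha(n)})$, and an error $o(\sigma_y(n))$ absorbed by the diverging standard deviation via a sandwich on the cumulative distribution functions. The only discrepancies are notational --- you swap the roles of $C_1$ (low indices in the paper) and $C_3$ (high indices) --- plus an extra refinement on the location of the first gap that the paper renders unnecessary by bounding the error crudely by $q(n)$ and taking $q(n)=o(\sigma_y(n))$ from the outset.
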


%%%%%%%%%%%%%%%%%%%%%%%%%%%%%%%%%%%%%%%%%%%%%%%%%%%%%%%%%%%%%%%%%%%%%%%%%%%%%%%%%%%%%%%%%%%%%%%%%%%%%%%%%%%%%%%%%%%%%%%%%%%%%%%%%%%%%%%%%%%%%%%%
%%%%%%%%%%%%%%%%%%%%%%%%%%%%%%%%%%%%%%%%%%%%%%%%%%%%%%%%%%%%%%%%%%%%%%%%%%%%%%%%%%%%%%%%%%%%%%%%%%%%%%%%%%%%%%%%%%%%%%%%%%%%%%%%%%%%%%%%%%%%%%%%
%%%%%%%%%%%%%%%%%%%%%%%%%%%%%%%%%%%%%%%%%%%%%%%%%%%%%%%%%%%%%%%%%%%%%%%%%%%%%%%%%%%%%%%%%%%%%%%%%%%%%%%%%%%%%%%%%%%%%%%%%%%%%%%%%%%%%%%%%%%%%%%%
%%%%%%%%%%%%%%%%%%%%%%%%%%%%%%%%%%%%%%%%%%%%%%%%%%%%%%%%%%%%%%%%%%%%%%%%%%%%%%%%%%%%%%%%%%%%%%%%%%%%%%%%%%%%%%%%%%%%%%%%%%%%%%%%%%%%%%%%%%%%%%%%

\section{Preliminaries}\label{sec:preliminaries}

In order to prove Theorem \ref{thm:mainthm}, we establish a correspondence between the decompositions of integers in the interval $[m,m+F_{\alpha(n)})$ and those in $[0,F_{\alpha(n)})$.  We first introduce some notation.  Fix a non-decreasing positive function $q(n)<n-\alpha(n)$ taking on even integer values with the restriction that $q(n)\to\infty$; later (see \eqref{eq:restrictiononq}) we will see that we must also take $q(n) = o(\sqrt{n})$. For $m\in [F_n,F_{n+1})$ with decomposition
\begin{align}\label{eq:decompmpreCs}
m & \ = \ \sum_{j=1}^n{a_jF_j},
\end{align}
define
\begin{align}\label{eq:decompmintoC1C2C3}
& C_1(m) \ := \ (a_1,a_2,...,a_{\alpha(n)}), \nonumber\\
& C_2(m) \ := \ (a_{\alpha(n)+1},...,a_{\alpha(n)+q(n)}), \ \text{and} \nonumber\\
& C_3(m) \ := \ (a_{\alpha(n)+q(n)+1},...,a_n).
\end{align}
Note that each $a_i \in \{0,1\}$ for all $1 \le i \le n$. Let $s(m)$ be the number of summands in the decomposition of $m$. That is, let
\begin{align}
s(m) \ := \ \sum_{j=1}^n{a_j}.
\end{align}
 Similarly, let $s_1(m), \ s_2(m)$, and $s_3(m)$ be the number of summands contributed by $C_1(m), \ C_2(m)$, and $C_3(m)$ respectively. Note that no two consecutive $a_j$'s equal $1$.

\begin{lem}
Let $x\in[m,m+F_{\alpha(n)})$. If there are at least two consecutive 0's in $C_2(m)$, then $C_3(x)$ is constant, and hence $s_3(x)$ is constant as well.
\end{lem}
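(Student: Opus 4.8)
The plan is to show that a run of two consecutive zeros in $C_2(m)$ functions as an impenetrable barrier: it confines the effect of adding any quantity less than $F_{\alpha(n)}$ to the indices strictly below the barrier, so that all indices above it — and in particular every index of $C_3(m)$ — are frozen. Write $k = \alpha(n)$ and $q = q(n)$ for brevity, and by hypothesis fix a pair of consecutive zeros $a_i = a_{i+1} = 0$ with $k+1 \le i$ and $i+1 \le k+q$. Since $x \in [m, m+F_{\alpha(n)})$ we have $0 \le x - m < F_k$. First I would split the decomposition of $m$ at the barrier: using $a_i = a_{i+1} = 0$, set $P = \sum_{j=1}^{i-1} a_j F_j$ and $S = \sum_{j \ge i+2} a_j F_j$, so that $m = P + S$ and $x = \bigl(P + (x-m)\bigr) + S$.

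The crux is an estimate showing the low block cannot reach index $i+1$. Since $P$ is the value of a legal (non-adjacent) decomposition supported on indices $\le i-1$, we have $P < F_i$. Since $i \ge k+1$ forces $F_k \le F_{i-1}$, we also have $x - m < F_k \le F_{i-1}$, and therefore
\[
P + (x-m) \ < \ F_i + F_{i-1} \ = \ F_{i+1}.
\]
Hence the Zeckendorf decomposition of $P + (x-m)$ is supported entirely on indices $\le i$.

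Finally I would invoke uniqueness of Zeckendorf decompositions. The decomposition of $P+(x-m)$ lives on indices $\le i$ while $S$ lives on indices $\ge i+2$; since index $i+1$ is skipped, the concatenation of the two is a legal non-adjacent decomposition of $x$, and so by uniqueness it \emph{is} the Zeckendorf decomposition of $x$. Consequently the digits of $x$ at every index $\ge i+2$ coincide with those of $m$. Because $i+1 \le k+q$ gives $i+2 \le k+q+1$, every index of $C_3$ (which begins at $k+q+1$) lies in this frozen range, so $C_3(x) = C_3(m)$ for all $x \in [m, m+F_{\alpha(n)})$, and hence $s_3(x) = s_3(m)$ is constant.

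The main obstacle is the estimate $P + (x-m) < F_{i+1}$ together with the conceptual point that a gap of length exactly two is what genuinely blocks carry propagation in Zeckendorf arithmetic. This is precisely where the hypothesis of \emph{two} consecutive zeros is essential: a single zero at index $i$ would allow the low block to produce a term $F_i$ which, adjacent to a possible $F_{i+1}$ sitting above it, would trigger the carry $F_i + F_{i+1} = F_{i+2}$ and a cascade upward, destroying the clean split. The two-zero gap guarantees that the low block reaches at most index $i$ while the high block starts at index $\ge i+2$, leaving a vacant index $i+1$ between them in every case.
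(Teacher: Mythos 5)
Your proof is correct and follows essentially the same route as the paper's: both split $m$ at the two-zero gap into a low part $P$ (the paper's $m'$) and a high part $S$, use $P < F_i$ and $x-m < F_{\alpha(n)} \le F_{i-1}$ to get $P + (x-m) < F_{i+1}$, and conclude by concatenation/uniqueness that all digits at indices $\ge i+2$, hence all of $C_3$, are unchanged. Your write-up is if anything slightly more explicit than the paper's in invoking uniqueness of the Zeckendorf decomposition to justify that the concatenated legal decomposition is \emph{the} decomposition of $x$.
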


\begin{proof}
Assume there are at least two consecutive 0's in $C_2(m)$. Then for some $k\in$ $[\alpha(n)+2$, $\alpha(n)+q(n))$, we have $a_{k-1}=a_k=0$.  Let $m'$ denote the integer obtained by truncating the decomposition of $m$ at $a_{k-2}F_{k-2}$. (Note that if $a_{k-2}=1$, we include $F_{k-2}$ in the truncated decomposition, and if $a_{k-2}=0$ we do not.)  Then $m'<F_{k-1}$. Since $F_{\alpha(n)}\leq F_{k-2}$, it follows that for any $h<F_{\alpha(n)}$ we have
\begin{align}
m'+h  \ < \ F_{k-1}+F_{k-2}\ =\ F_{k},
\end{align}
and thus the decomposition of $m'+h$ has largest summand no greater than $F_{k-1}$.  Therefore, the Zeckendorf decomposition of $m+h$ is obtained simply by concatenating the decompositions for $m-m'$ and $m'+h$.  Hence $C_3(m+h)=C_3(m-m')=C_3(m)$.
\end{proof}

With this lemma, we see that the distribution of the number of summands involved in the decomposition of $x\in [m,m+F_{\alpha(n)})$ depends (up to a shift) only on what happens in $C_1(x)$ and $C_2(x)$, \textbf{\emph{provided}} that there is a gap between summands of length at least two somewhere in $C_2(m)$.  In light of this stipulation, we will show the following items in order to prove our main theorem.

\begin{itemize}

\item With high probability, $m$ is of the desired form (i.e., there is a gap between summands of length at least two in $C_2(m)$). \vspace{5pt}

\item When $m$ is of the desired form, the distribution of the number of summands involved in $C_1(x)$ for $x\in [m,m+F_{\alpha(n)})$ converges to Gaussian when appropriately normalized. \vspace{5pt}

\item The summands involved in $C_2(x)$ produce a negligible error term (i.e., there are significantly fewer summands from $C_2(x)$ than there are from $C_1(m)$).

\end{itemize}

We address the first point with the following lemma.

\begin{lem} \label{lem:2consecutive0s}
With probability $1+o(1)$, there are at least 2 consecutive 0's in $C_2(m)$ if $m$ is chosen uniformly at random from the integers in $[F_n, F_{n+1})$.
\end{lem}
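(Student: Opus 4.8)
The plan is to prove the complementary event has vanishing probability: I will show that the number of $m \in [F_n, F_{n+1})$ whose window $C_2(m) = (a_{\alpha(n)+1}, \dots, a_{\alpha(n)+q(n)})$ contains \emph{no} two consecutive $0$'s is an exponentially small fraction of the $F_{n+1} - F_n = F_{n-1}$ integers in $[F_n, F_{n+1})$. The key observation is that a legal Zeckendorf decomposition already forbids two consecutive $1$'s, so if in addition $C_2(m)$ has no two consecutive $0$'s, then on the $q(n)$ positions of $C_2(m)$ the digit string can contain neither $00$ nor $11$; hence it must alternate. There are exactly two alternating strings of length $q(n)$, namely $0101\cdots$ and $1010\cdots$, so the content of the window is pinned down to at most two possibilities.

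Next I would count the legal completions. Writing the decomposition as the concatenation of $C_1(m)$ (length $\alpha(n)$), $C_2(m)$ (length $q(n)$) and $C_3(m)$ (the top $n-\alpha(n)-q(n)$ positions, with $a_n=1$ forced since $m \ge F_n$), the number of legal strings restricting to a prescribed $C_2$ window is at most the product of the number of legal fillings of $C_1$ and of $C_3$. Let $g(\ell)$ denote the number of binary strings of length $\ell$ with no two consecutive $1$'s; a standard count gives $g(\ell) = F_{\ell+1}$ in this normalization. Dropping the cross-block adjacency constraints (which only overcounts, and so is harmless for an upper bound) and ignoring the restriction $a_n = 1$, the number of ``bad'' $m$ is at most $2\,g(\alpha(n))\,g(n-\alpha(n)-q(n)) = 2\,F_{\alpha(n)+1}\,F_{n-\alpha(n)-q(n)+1}$.

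Finally I would compare against the total. Using $F_k = c\varphi^k + O(\varphi^{-k})$ with $\varphi = (1+\sqrt5)/2$, the bad count is $O\!\left(\varphi^{\alpha(n)}\,\varphi^{\,n-\alpha(n)-q(n)}\right) = O\!\left(\varphi^{\,n-q(n)}\right)$, while the total number of integers in $[F_n, F_{n+1})$ is exactly $F_{n-1} \asymp \varphi^{n-1}$. Thus the fraction of bad $m$ is $O\!\left(\varphi^{-q(n)}\right)$, which tends to $0$ precisely because $q(n)\to\infty$; in fact the convergence is geometric in $q(n)$. Subtracting from $1$ shows that with probability $1+o(1)$ the window $C_2(m)$ contains at least two consecutive $0$'s, as claimed.

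I expect the only delicate point to be the bookkeeping at the block boundaries: the adjacency rule couples the last digit of $C_1(m)$ with the first digit of $C_2(m)$, and similarly across the $C_2(m)$–$C_3(m)$ interface and at the top where $a_n=1$ is forced. Since I only need an upper bound on the number of bad $m$, I can afford to relax these cross-boundary constraints, treating the blocks independently, which keeps the argument transparent; promoting the estimate to an exact identity would require a short transfer-matrix computation but is unnecessary for the qualitative $o(1)$ conclusion.
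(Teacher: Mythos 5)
Your proposal is correct and follows essentially the same route as the paper: the bad window must be one of the two alternating strings, and counting the legal completions of $C_1$ and $C_3$ shows the bad set has density $O\left(\varphi^{-q(n)}\right) = o(1)$ since $q(n)\to\infty$. The only difference is that you relax the cross-block adjacency and $a_n=1$ constraints to get an upper bound, whereas the paper computes the exact counts $F_{n-\alpha(n)-q(n)-1}F_{\alpha(n)}$ and $F_{n-\alpha(n)-q(n)-2}F_{\alpha(n)+1}$; both yield the same conclusion.
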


\begin{proof}
Suppose $m$ is not of the desired form. Recalling that $q(n)$ takes on even integer values, it follows that either $C_2(m) = (1,0,1,0,\dots,1,0)$ or $C_2(m) = (0,1,0,1,\dots,0,1)$. For each of these two cases, we now count the total number of ways to choose the coefficients for $C_3(m)$ and $C_1(m)$.

In the former case, we have $a_{\alpha(n)+q(n)}=0$. Thus the number of ways to choose the coefficients for $C_3(m)$ is equal to the number of ways to legally construct
\begin{equation}\label{eqn:numberofways}
\sum_{j=\alpha(n)+q(n)+1}^{n}{a_jF_j}
\end{equation}
with no nonzero consecutive coefficients and $a_n = 1$ (since $m \in [F_n, F_{n+1})$ we must select $F_n$). There are $F_{n-\alpha(n)-q(n)-1}$ ways to make such a construction, so we conclude that the number of ways to choose the coefficients for $C_3(m)$ is equal to $F_{n-\alpha(n)-q(n)-1}$. To see this, we argue as in \cite{BBGILMT,BILMT}. By shifting indices, the number of legal constructions here is the same as the number of legal  ways to choose the coefficients in\be \sum_{j=1}^{n-\alpha(n)-q(n)} \widetilde{a}_j F_j \ee where we must choose the final summand. By Zeckendorf's theorem, this is equivalent to counting the number of elements in $[F_{n-\alpha(n)-q(n)}, F_{n-\alpha(n)-q(n)+1})$, which by the Fibonacci recurrence is just $F_{n-\alpha(n)-q(n)-1}$.  Thus the number of ways to choose the coefficients for $C_3(m)$ is equal to $F_{n-\alpha(n)-q(n)-1}$. Similarly, since $a_{\alpha(n)}=0$ the number of ways to choose the coefficients for $C_1(m)$ is equal to $F_{\alpha(n)}$. Thus, if $C_2(m) = (1,0,1,0,\dots,1,0)$, there are $F_{n-3-\alpha(n)-q(n)}F_{\alpha(n)}$ ways to choose the coefficients for $C_3(m)$ and $C_1(m)$.

A similar counting argument shows that if $C_2(m) = (0,1,0,1,\dots,0,1)$, then the coefficients for $C_3(m)$ and $C_1(m)$ can be chosen in $F_{n-\alpha(n)-q(n)-2}F_{\alpha(n)+1}$ different ways. Therefore, since $q(n)\to\infty$ as $n\to \infty$, the probability of $m$ not being of the desired form is \begin{align}
\frac{F_{n-\alpha(n)-q(n)-1}F_{\alpha(n)}+F_{n-\alpha(n)-q(n)-2}F_{\alpha(n)+1}}{F_{n-1}}\ \sim\ \frac{2}{\sqrt{5}}\phi^{-q(n)}\ =\ o(1).
\end{align}
\end{proof}

Assuming $m$ is of the desired form, we now consider the distribution of $s(x)$ for $x\in [m,m+F_{\alpha(n)})$.

\begin{lem}\label{error}
If $m$ has at least 2 consecutive 0's in $C_2(m)$, then for all $x\in [m,m+F_{\alpha(n)})$, we have
\begin{align}
0\ \leq\ s(x)-s_3(m)-s(t(x)) \ < \ q(n),
\end{align}
where $t(x)$ denotes some bijection
\begin{align}
t:\mathbb{Z}\cap [m,m+F_{\alpha(n)})\to\mathbb{Z}\cap [0,F_{\alpha(n)}).
\end{align}
\end{lem}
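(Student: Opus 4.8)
The plan is to peel off the three blocks of the decomposition and reduce everything to the already-understood statistics on $[0,F_{\alpha(n)})$. First I would apply the previous lemma: since $m$ has two consecutive $0$'s in $C_2(m)$, the block $C_3$ is frozen, so $s_3(x)=s_3(m)$ for every $x\in[m,m+F_{\alpha(n)})$. Splitting $s(x)=s_1(x)+s_2(x)+s_3(x)$ as in \eqref{eq:decompmintoC1C2C3} then yields the exact identity $s(x)-s_3(m)=s_1(x)+s_2(x)$, so the quantity to be bounded counts precisely the summands of $x$ at indices at most $\alpha(n)+q(n)$. Since $C_2$ occupies only $q(n)$ indices and no two summands are adjacent, $0\le s_2(x)\le\lceil q(n)/2\rceil<q(n)$; this is exactly the source of the permitted error, and the real content is that the block $C_1$ carries the same statistics as $[0,F_{\alpha(n)})$.

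To make the correspondence concrete I would subtract the now-constant value of $C_3$, setting $\ell(x):=x-\sum_{j>\alpha(n)+q(n)}a_jF_j$, so that $\ell$ runs through the $F_{\alpha(n)}$ consecutive integers in $[\ell(m),\ell(m)+F_{\alpha(n)})$ and $s(\ell(x))=s_1(x)+s_2(x)$. Using the gap at the two consecutive $0$'s, say at indices $k-1,k$, I would further write $\ell(x)=(\ell(m)-m')+(m'+h)$ with $h=x-m$, where the top piece is frozen and contributes a constant number $r<q(n)$ of summands, while $m'+h$ sweeps the consecutive block $[m',m'+F_{\alpha(n)})\subseteq[0,F_k)$. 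This reduces the lemma to comparing the summand statistics of one length-$F_{\alpha(n)}$ block of consecutive integers with those of the aligned block $[0,F_{\alpha(n)})$.

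For the bijection itself I would deliberately avoid the naive choice $t(x)=x-m$, which fails as soon as $m$ carries many low summands (the endpoint $x=m$ already produces a discrepancy $s_1(m)+s_2(m)$ that can dwarf $q(n)$). Instead I would take $t$ to be the monotone rearrangement: list $[m,m+F_{\alpha(n)})\cap\Z$ in increasing order of $s(x)-s_3(m)$ and $[0,F_{\alpha(n)})\cap\Z$ in increasing order of $s(\cdot)$, and pair them in that order. This is automatically a bijection, and since pairing two sequences in increasing order minimizes the maximal coordinatewise gap among all bijections, it is the optimal choice; the two desired inequalities then reduce to the combinatorial claim that the sorted summand-count sequence of the interval (i) dominates that of $[0,F_{\alpha(n)})$ coordinatewise, giving $s(x)-s_3(m)-s(t(x))\ge0$, and (ii) exceeds it by strictly less than $q(n)$ in each coordinate, giving the upper bound.

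The hard part will be verifying (i) and (ii), that is, bounding the drift between the summand distribution of the structured block $[m',m'+F_{\alpha(n)})$ and that of $[0,F_{\alpha(n)})$. I would attack this with the Fibonacci-counting method already used in Lemma \ref{lem:2consecutive0s}: for each threshold $c$, count via the recurrence the legal strings in each block having at most $c$ summands, and show the two counts agree except for contributions that are either frozen (the constant $r$ from the upper part of $C_2$) or confined to the $C_2$-window, of which there are at most $\lceil q(n)/2\rceil$. Domination (i) follows because the interval sits strictly above $0$, so its counts of low-summand integers never exceed those of the aligned block; and (ii) follows because each sorted gap is built from summands living in the $q(n)$ indices of $C_2$ together with bounded lower-order carrying effects, and I expect the total to stay below $q(n)$, exactly the slack the statement allows. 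The growth restriction $q(n)=o(\sqrt n)$ is not needed for this bound itself but is invoked downstream, when this lemma is fed into the Gaussian argument and the error $q(n)$ must be dominated by the standard deviation.
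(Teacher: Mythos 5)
Your opening reduction is right and matches the paper's: Lemma 2.1 freezes $C_3$, so $s(x)-s_3(m)=s_1(x)+s_2(x)$, and the non-adjacency of summands gives $0\le s_2(x)<q(n)$; you also correctly diagnose why the naive choice $t(x)=x-m$ fails. But from there the proof has a genuine gap. You replace the task of exhibiting a bijection by the monotone (sorted) rearrangement and reduce the lemma to the coordinatewise claims (i) and (ii) about the sorted summand-count sequences. By your own rearrangement observation, those claims are \emph{equivalent} to the existence of some bijection satisfying the stated bounds --- which is the lemma itself --- so nothing has been gained, and the justifications you offer for (i) and (ii) (``the interval sits strictly above $0$,'' ``I expect the total to stay below $q(n)$'') are not arguments. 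In particular, the threshold-counting scheme you sketch would have to control how the low-index digits of $m$ interact with adding $h$, which is exactly the content you have deferred.

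The missing idea is that there is an \emph{explicit} bijection doing the job, and you nearly wrote it down in your second paragraph before pivoting away. Let $m_0=\sum_{j<\alpha(n)}a_jF_j$ be the low part of $m$ and set $t(m+h):=m_0+h$ if $m_0+h<F_{\alpha(n)}$ and $t(m+h):=m_0+h-F_{\alpha(n)}$ otherwise; this is a bijection onto $\mathbb{Z}\cap[0,F_{\alpha(n)})$. Because the digits of $x=m+h$ above index $\alpha(n)+q(n)$ are those of $m$ (Lemma 2.1) and the digits of $x$ below index $\alpha(n)$ coincide with the Zeckendorf digits of $t(x)$, one gets $s(t(x))=s_1(x)$ exactly, hence $s(x)-s_3(m)-s(t(x))=s_2(x)\in[0,q(n))$ with no further combinatorics. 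If you prefer to keep your sorted-pairing formulation, you must still construct this (or some) explicit witness bijection first and then invoke the rearrangement fact; as written, the proposal restates the problem rather than solving it. Your final remark about the role of $q(n)=o(\sqrt{n})$ being needed only downstream is correct.
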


\begin{proof}
First, note that the number of summands in the decomposition of $x$ with indices $i\in$ $[\alpha(n)$, $\alpha(n)+q(n))$ must be less than $q(n)$. Next, let $m_0$ be the sum of the terms in the decomposition of $x$ truncated at $a_{\alpha(n)-1}F_{\alpha(n)-1}$.  Define the bijection $t$ by
\begin{align}
\twocase{t(m+h)\ := \ }{m_0+h,}{if $m_0+h<F_{\alpha(n)}$}{m_0+h-F_{\alpha(n)}}{if $m_0+h\geq F_{\alpha(n)}$.}
\end{align}
For any $x\in [m,m+F_{\alpha(n)})$, the decompositions of $t(x)$ and $x$ agree for the terms with index less than $\alpha(n)$.  Furthermore, the decompositions of $x$ and $m$ agree for terms with index greater than $\alpha(n)+q(n)$.  Therefore, the number of summands in the decomposition of $x$ with indices $i\in [\alpha(n),\alpha(n)+q(n))$ is equal to $s(x)-s_3(m)-s(t(x))$. Combining this with our initial observation, the lemma now follows.
\end{proof}

As a result of this lemma, the distribution of $s(x)$ over the integers in $[m,m+F_{\alpha})$ is a shift of its distribution over $[0,F_{\alpha(n)})$, up to an error bounded by $q(n)$.  With this fact, we are now ready to prove the main theorem.

%%%%%%%%%%%%%%%%%%%%%%%%%%%%%%%%%%%%%%%%%%%%%%%%%%%%%%%%%%%%%%%%%%%%%%%%%%%%%%%%%%%%%%%%%%%%%%%%%%%%%%%%%%%%%%%%%%%%%%%%%%%%%%%%%%%%%%%%%%%%%%%%
%%%%%%%%%%%%%%%%%%%%%%%%%%%%%%%%%%%%%%%%%%%%%%%%%%%%%%%%%%%%%%%%%%%%%%%%%%%%%%%%%%%%%%%%%%%%%%%%%%%%%%%%%%%%%%%%%%%%%%%%%%%%%%%%%%%%%%%%%%%%%%%%
%%%%%%%%%%%%%%%%%%%%%%%%%%%%%%%%%%%%%%%%%%%%%%%%%%%%%%%%%%%%%%%%%%%%%%%%%%%%%%%%%%%%%%%%%%%%%%%%%%%%%%%%%%%%%%%%%%%%%%%%%%%%%%%%%%%%%%%%%%%%%%%%
%%%%%%%%%%%%%%%%%%%%%%%%%%%%%%%%%%%%%%%%%%%%%%%%%%%%%%%%%%%%%%%%%%%%%%%%%%%%%%%%%%%%%%%%%%%%%%%%%%%%%%%%%%%%%%%%%%%%%%%%%%%%%%%%%%%%%%%%%%%%%%%%
\section{Proof of Theorem \ref{thm:mainthm}}\label{sec:proofofthmmain}
We now prove our main result. The key idea is that with probability approaching $1$, we have a gap of length at least $2$ in the middle summands of our decompositions, and this allows us to use our bijection to reduce  questions on the distribution of the number of summands in $[m, m + F_{\alpha(n)})$ to similar statements on $[0, F_{\alpha(n)})$. In doing so, the fluctuations in the difference between the two quantities is bounded by $q(n)$, which is a free parameter in our splitting of the decomposition, and can therefore be taken to be sufficiently small.
\begin{proof}
For a fixed $m\in [F_n,F_{n+1})$ with two consecutive 0's somewhere in $C_2(m)$, we define random variables $X_n$ and $Y_n$ by
\begin{align}
& X_n \ := \ s(X), \ \ \
Y_n \ := \ s(Y),
\end{align}
where $X$ is chosen uniformly at random from $\mathbb{Z}\cap[m,m+F_{\alpha(n)})$ and $Y$ is chosen uniformly at random from $\mathbb{Z}\cap [0,F_{\alpha(n)})$. Let
\begin{align}
& X_n' \ := \ \frac{1}{\sigma_x(n)}(X_n-\mathbb{E}[X_n]),
\intertext{and}
& Y_n' \ := \ \frac{1}{\sigma_y(n)}(Y_n-\mathbb{E}[Y_n]),
\end{align}
where $\sigma_x(n)$ and $\sigma_y(n)$ are the standard deviations of $X_n$ and $Y_n$, respectively, so that $X_n$ and $Y_n$ are normalized with mean 0 and variance 1. It is known that the densities of $Y_n'$ converge to the density of the standard normal\footnote{Many of the references give proofs both for the case of the Fibonacci numbers as well as for more general recurrences; see \cite{KKMW} for a simple proof using just Stirling's formula, which yields that the mean grows on the order of $\alpha(n)$ and the standard deviation grows on the order of $\sqrt{\alpha(n)}$.}, and we claim that $X_n'$ converges to the standard normal as well. Though we only need the order of magnitude of $\sigma_y(n)$, for completeness we remark that the mean of $Y_n$ is $\frac{n}{\varphi+2} + O(1)$ and the variance $\sigma_y(n)^2$ is $\frac{\varphi n}{5(\varphi+2)} + O(1)$, where $\varphi = \frac{1+\sqrt{5}}{2}$ is the golden mean.

Let $f_n$ and $g_n$ be the cumulative density functions for $X_n'$ and $Y_n'$, respectively.  By Lemma \ref{error}, we have
\begin{align}
g_n\left(x-\frac{q(n)}{\sigma_y(n)}\right) \ \leq \ f_n(x) \ \leq \ g_n\left(x+\frac{q(n)}{\sigma_y(n)}\right).
\end{align}
Since $\sigma_y(n)\to\infty$, we may add the restriction to $q(n)$ that \be\label{eq:restrictiononq} q(n)\ = \ o\left(\sigma_y(n)\right) \ = \ o\left(\sqrt{n}\right).\ee  Since $\{g_n\}_n$ converges pointwise to the cumulative distribution function for the standard normal, say $g(x)$, and since
\begin{align}
\lim_{n\to\infty}{g_n\left(x-\frac{q(n)}{\sigma_y(n)}\right)}
\ = \ \lim_{n\to\infty}{g_n\left(x+\frac{q(n)}{\sigma_y(n)}\right)}
\ = \ g(x),
\end{align}
it follows that $\{f_n\}_n$ also converges pointwise to $g(x)$.
\end{proof}

%%%%%%%%%%%%%%%%%%%%%%%%%%%%%%%%%%%%%%%%%%%%%%%%%%%%%%%%%%%%%%%%%%%%%%%%%%%%%%%%%%%%%%%%%%%%%%%%%%%%%%%%%%%%%%%%%%%%%%%%%%%%%%%%%%%%%%%%%%%%%%%%
%%%%%%%%%%%%%%%%%%%%%%%%%%%%%%%%%%%%%%%%%%%%%%%%%%%%%%%%%%%%%%%%%%%%%%%%%%%%%%%%%%%%%%%%%%%%%%%%%%%%%%%%%%%%%%%%%%%%%%%%%%%%%%%%%%%%%%%%%%%%%%%%
%%%%%%%%%%%%%%%%%%%%%%%%%%%%%%%%%%%%%%%%%%%%%%%%%%%%%%%%%%%%%%%%%%%%%%%%%%%%%%%%%%%%%%%%%%%%%%%%%%%%%%%%%%%%%%%%%%%%%%%%%%%%%%%%%%%%%%%%%%%%%%%%
%%%%%%%%%%%%%%%%%%%%%%%%%%%%%%%%%%%%%%%%%%%%%%%%%%%%%%%%%%%%%%%%%%%%%%%%%%%%%%%%%%%%%%%%%%%%%%%%%%%%%%%%%%%%%%%%%%%%%%%%%%%%%%%%%%%%%%%%%%%%%%%%
\section{Conclusion and Future Work}

We were able to handle the behavior of the number of Zeckendorf summands of numbers  drawn from small intervals by finding a correspondence between Zeckendorf decompositions in the interval $[m, m + F_{\alpha(n)})$ and in the interval $[0, F_{\alpha(n)})$ when a certain gentle condition is placed on the integers $m$ we consider. The key step was to show that almost surely an integer $m$ chosen uniformly at random from $[F_n, F_{n+1})$ will permit the construction of a bijection onto the interval $[0, F_{\alpha(n)})$. Our results follow from previous results on the Gaussian behavior of the number of Zeckendorf summands in this interval.

Our arguments hold for more general recurrence relations (see \cite{BDEMMTTW}), though the arguments become more technical. There are two approaches to proving an analogue of the key step, specifically showing that for almost all $m$ we have a sufficiently large gap in the middle section. One approach is to appeal to some high powered machinery that shows the distribution of the longest gap between summands for $m \in [F_, F_{n+1})$ is strongly concentrated about $C \log\log n$, where $C$ is some constant depending on the recurrence. Results along these lines are known for many recurrences; see \cite{B-AM,BILMT}. Of course, these results contain far more than we need; we do not need to know there is a gap as large as $C \log \log n$, but rather just that there is a gap a little longer than the length of the recurrence.

%%%%%%%%%%%%%%%%%%%%%%%%%%%%%%%%%%%%%%%%%%%%%%%%%%%%%%%%%%%%%%%%%%%%%%%%%%%%%%%%%%%%%%%%%%%%%%%%%%%%%%%%%%%%%%%%%%%%%%%%%%%%%%%%%%%%%%%%%%%%%%%%
%%%%%%%%%%%%%%%%%%%%%%%%%%%%%%%%%%%%%%%%%%%%%%%%%%%%%%%%%%%%%%%%%%%%%%%%%%%%%%%%%%%%%%%%%%%%%%%%%%%%%%%%%%%%%%%%%%%%%%%%%%%%%%%%%%%%%%%%%%%%%%%%
%%%%%%%%%%%%%%%%%%%%%%%%%%%%%%%%%%%%%%%%%%%%%%%%%%%%%%%%%%%%%%%%%%%%%%%%%%%%%%%%%%%%%%%%%%%%%%%%%%%%%%%%%%%%%%%%%%%%%%%%%%%%%%%%%%%%%%%%%%%%%%%%
%%%%%%%%%%%%%%%%%%%%%%%%%%%%%%%%%%%%%%%%%%%%%%%%%%%%%%%%%%%%%%%%%%%%%%%%%%%%%%%%%%%%%%%%%%%%%%%%%%%%%%%%%%%%%%%%%%%%%%%%%%%%%%%%%%%%%%%%%%%%%%%%

\ \\

\end{document}